\documentclass[preprint]{elsarticle}

\usepackage{lineno,hyperref}
\modulolinenumbers[5]

\usepackage{amsmath,amsthm,amssymb}
\usepackage{kbordermatrix}
\usepackage{enumitem}
\usepackage{physics}
\usepackage{mathtools}
\usepackage{mathrsfs}

\allowdisplaybreaks

\newtheorem{thm}{Theorem}[section]
\newtheorem{lem}[thm]{Lemma}
\newtheorem{cor}[thm]{Corollary}
\newtheorem{prop}[thm]{Proposition}
\newtheorem{conj}[thm]{Conjecture}
\newtheorem{obs}[thm]{Observation}

\theoremstyle{definition}			                						
\newtheorem{mydef}[thm]{Definition}
\newtheorem{rem}[thm]{Remark}

\newtheorem{prob}[thm]{Problem}

\DeclareMathOperator{\diag}{diag}
\DeclareMathOperator{\circulant}{circ}

\journal{Linear Algebra and its Applications}










\begin{document}

\begin{frontmatter}

\title{Polynomials that preserve nonnegative matrices}


\author[uwb]{Benjamin J.~Clark\fnref{funding}}
\ead{thrin@uw.edu}
\fntext[funding]{Supported by a 2018 University of Washington Bothell Founder's Fellow Award.}

\author[uwbaddress]{Pietro Paparella\fnref{funding2}\corref{ca}}
\cortext[ca]{Corresponding author}
\ead{pietrop@uw.edu}
\ead[url]{http://faculty.washington.edu/pietrop/}

\address[uwbaddress]{Division of Engineering and Mathematics, University of Washington Bothell, Bothell, WA 98011-8246, USA}
\address[uwb]{University of Washington Bothell, Bothell, WA 98011-8246, USA}
\fntext[funding2]{Supported by a 2021 University of Washington Bothell Scholarship, Research, and Creative Practice Seed Grant.}

\begin{abstract}
In further pursuit of a solution to the celebrated nonnegative inverse eigenvalue problem, Loewy and London [Linear and Multilinear Algebra 6 (1978/79), no.~1, 83--90] posed the problem of characterizing all polynomials that preserve all nonnegative matrices of a fixed order. If $\mathscr{P}_n$ denotes the set of all polynomials that preserve all $n$-by-$n$ nonnegative matrices, then it is clear that polynomials with nonnegative coefficients belong to $\mathscr{P}_n$. However, it is known that $\mathscr{P}_n$ contains polynomials with negative entries. In this work, novel results for $\mathscr{P}_n$ with respect to the coefficients of the polynomials belonging to $\mathscr{P}_n$. Along the way, a generalization for the even-part and odd-part are given and shown to be equivalent to another construction that appeared in the literature. Implications for further research are discussed.
\end{abstract}

\begin{keyword}
polynomial \sep nonnegative matrix \sep nonnegative inverse eigenvalue problem \sep circulant matrix
\MSC[2020] 15B48 \sep 30C10  
\end{keyword}

\end{frontmatter}


\section{Introduction}

The longstanding \emph{nonnegative inverse eigenvalue problem} (``NIEP") is the problem of recognizing the spectra of nonnegative matrices. In particular, given a multi-set $\Lambda = \{ \lambda_1, \dots, \lambda_n \}$ of complex numbers, the NIEP asks for necessary and sufficient conditions such that $\Lambda$ is the spectrum of an $n$-by-$n$ entrywise-nonnegative matrix $A$, in which case the multi-set $\Lambda$ is called \emph{realizable} and the matrix $A$ is called a \emph{realizing matrix} for $\Lambda$. 

It is well-known that if $\Lambda$ is realizable with realizing matrix $A$, then  
\begin{equation*} 
s_k (\Lambda) := \sum_{i=1}^n \lambda_i^k = \trace{(A^k)} \geq 0,~\forall k \in \mathbb{N}, 	
\end{equation*}
and  
\begin{equation}
s_k^m (\Lambda) \leq n^{m-1} s_{km} (\Lambda), \forall k, m \in \mathbb{N}.  		           \label{JLL}  
\end{equation}
Condition \eqref{JLL} is called the \emph{J-LL condition} and was established independently by Johnson \cite{j1981} and by Loewy and London \cite{ll1978-79}.

For a polynomial $p$, let $p(\Lambda) := \{ p(\lambda_1), \dots, p(\lambda_n) \}$. If $p$ is a polynomial that preserves the nonnegativity of all $n$-by-$n$ nonnegative matrices, then  
\begin{equation} 
s_k (p(\Lambda)) = \sum_{i=1}^n p(\lambda_i)^k = \trace{(p(A)^k)} \geq 0,~\forall k \in \mathbb{N}, 	\label{trnnp}		
\end{equation}
and
\begin{equation}
s_k^m (p(\Lambda)) \leq n^{m-1} s_{km} (p(\Lambda)), \forall k, m \in \mathbb{N}.  		           \label{JLLp}  
\end{equation}

As noted by Loewy and London \cite{ll1978-79}, determining whether the inequalities \eqref{trnnp} and \eqref{JLLp} are sufficient requires a characterization of   
\[\mathscr{P}_n := \left\{ p \in \mathbb{C}[x] \mid p(A) \geq 0, \forall A \in \mathsf{M}_n (\mathbb{R}), A \geq 0  \right\}. \]
In particular, and for practical purposes, necessary and sufficient conditions are sought in terms of the coefficients of the polynomials. However, few results are known with respect to the coefficients of the polynomials in $\mathscr{P}_n$.

The characterization of $\mathscr{P}_1$ is known as the P\'{o}lya--Szeg\"{o} theorem (see, e.g., Powers and Reznick \cite[Proposition 2]{pr2000}), which asserts that $p \in \mathscr{P}_1$ if and only if 
\[ p(x) = \left(f_1(x)^2 +f_2(x)^2\right) + x\left(g_1(x)^2 + g_2(x)^2\right), \]
where $f_1,f_2,g_1,g_2 \in \mathbb{R}[x]$.

Bharali and Holtz \cite{bh2008} gave partial results for the set 
\[\mathscr{F}_n := \left\{ f~\text{entire} \mid f(A) \geq 0, \forall A \in \mathsf{M}_n (\mathbb{R}), A \geq 0  \right\} \supset \mathscr{P}_n \]
and characterized entire functions that preserve certain structured nonnegative matrices, including upper-triangular matrices and circulant matrices.

In this work, novel results on the coefficients of polynomials in $\mathscr{P}_n$ are presented. Along the way, a generalization for the even-part and odd-part is given and shown to be equivalent to a construction given by Balaich and Ondrus \cite{bo2011} for polynomials. We conclude with implications for further inquiry. 

\section{Notation and Background}

The set of $m$-by-$n$ matrices with entries from a field $\mathbb{F}$ is denoted by $\mathsf{M}_{m\times n}(\mathbb{F})$. If $m = n$, then $\mathsf{M}_{m\times n}(\mathbb{F})$ is abbreviated to $\mathsf{M}_{n}(\mathbb{F})$. The set of all $n$-by-$1$ column vectors is identified with the set of all ordered $n$-tuples with entries in $\mathbb{F}$ and thus denoted by $\mathbb{F}^n$. The $n$-by-$n$ identity matrix is denoted by $I = I_n$.

If $n \in \mathbb{N}$, $n > 1$, and $\lambda \in \mathbb{C}$, then $J_n(\lambda)$ denotes the \emph{Jordan block with eigenvalue $\lambda$}, i.e., 
\[ 
J_n(\lambda) =
\begin{bmatrix}
\lambda & 1         &                                   \\
        & \lambda   & 1                                 \\
        &           & \ddots    & \ddots                \\
        &           &           & \lambda   & 1         \\
        &           &           &           & \lambda
\end{bmatrix} \in \mathsf{M}_n(\mathbb{C}). \]

If $A \in \mathsf{M}_n(\mathbb{F})$, then $a_{ij}$ denotes the $(i,j)$-entry of $A$. If $\mathbb{F} = \mathbb{R}$ and $a_{ij} \ge 0$, $1 \le i,j \le n$, then $A$ is called \emph{nonnegative} and this is denoted by $A \geq 0$. 

Unless otherwise stated,   
\[ p(x) = \sum_{k=0}^m a_k x^k \in \mathbb{C}[x],\]
where $a_m \ne 0$. If $n$ is a positive integer less than or equal to $m$, then the coefficients $a_0, a_1,\dots, a_{n-1}$ are called the \emph{first $n$ terms of $p$} and the coefficients $a_{m-n + 1}, \dots, a_{m-1}, a_m$ are called the \emph{last $n$ terms of $p$}.

If $A \in \mathsf{M}_n(\mathbb{F})$, then $A$ is called a \emph{circulant} or \emph{circulant matrix} if there is a vector $v \in \mathbb{F}^n$, called the \emph{reference vector of $A$}, such that $a_{ij} = v_{(j-i)\bmod{n}+1}$. In such a case, we write $A = \circulant(v_1, \ldots, v_n)$. 
The $n$-by-$n$ circulant matrix with reference vector $e_2$ is denoted by $C = C_n$. Finally, note that if $q(x) = \sum_{k=0}^{n-1} v_{k+1} x^k$, then $A = q(C) = \sum_{k=0}^{n-1} v_{k+1} C^k$ \cite[p.~68]{d1979}.

\section{The Even and Odd Part of a Polynomial and a Generalization}

In this section, we generalize the notion of the even and odd part of a polynomial and show that it is equivalent to a construction given by Balaich and Ondrus \cite{bo2011}.

Recall that if $f: \mathbb{C} \longrightarrow \mathbb{C}$, then 
\[ f_e(x) := \frac{f(x) + f(-x)}{2} \]
is called the \emph{even-part of $f$} and  
\[ f_o(x) := \frac{f(x) - f(-x)}{2} \]
is called the \emph{odd-part of $f$}.  

As is well-known, or otherwise easy to show,   
\[ p_e(x) = \sum_{k\equiv 0\bmod{n}} a_k x^k \] 
and 
\[ p_o(x) = \sum_{k\equiv 1\bmod{n}} a_k x^k. \]
The following construction generalizes the aforementioned functions.

\begin{mydef}
\label{indexsets}
Let $n \in \mathbb{N}$ and $r \in \{0,1,\ldots,n-1\}$. If   
\[ \mathcal{I}_{(m,n,r)} := \{0 \le k \le m  \mid k\bmod{n} = r \}, \]
then the polynomial 
\[ p_{(r,n)}(x) := \sum_{k \in \mathcal{I}_{(m,n,r)}} a_k x^k, \]
is called the \emph{$r \bmod{n}$-part of $p$}.
\end{mydef}

\begin{obs}
    \label{obs:rpartsum}
If $p \in \mathbb{C}[x]$, then 
\(
    p(x) = \sum_{r = 0}^{n-1} p_{(r,n)}(x). 
\)
\end{obs}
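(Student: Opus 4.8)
The statement to prove is Observation~\ref{obs:rpartsum}: for any $p \in \mathbb{C}[x]$, we have $p(x) = \sum_{r=0}^{n-1} p_{(r,n)}(x)$.

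This is a routine bookkeeping argument about partitioning the index set of the coefficients. The plan is to observe that the exponents $\{0, 1, \ldots, m\}$ appearing in $p(x) = \sum_{k=0}^m a_k x^k$ are partitioned by the residue classes modulo $n$: every integer $k$ with $0 \le k \le m$ satisfies $k \bmod n = r$ for exactly one $r \in \{0, 1, \ldots, n-1\}$. Hence the index sets $\mathcal{I}_{(m,n,0)}, \mathcal{I}_{(m,n,1)}, \ldots, \mathcal{I}_{(m,n,n-1)}$ are pairwise disjoint and their union is $\{0, 1, \ldots, m\}$.

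First I would write out $\sum_{r=0}^{n-1} p_{(r,n)}(x) = \sum_{r=0}^{n-1} \sum_{k \in \mathcal{I}_{(m,n,r)}} a_k x^k$. Then, invoking the partition of $\{0,\ldots,m\}$ into the classes $\mathcal{I}_{(m,n,r)}$, I would interchange/merge the double sum into the single sum $\sum_{k=0}^m a_k x^k$, which is exactly $p(x)$ by the standing convention on $p$ fixed in Section~2. One could alternatively phrase this via the division algorithm: writing $k = qn + r$ with $0 \le r < n$ exhibits the unique $r$ to which $k$ belongs.

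There is essentially no obstacle here; the only thing to be careful about is that the definition is stated for a fixed $p$ with $\deg p = m$, so one should make clear that the decomposition depends on $n$ but the identity holds regardless of how $m$ and $n$ compare (in particular some of the parts $p_{(r,n)}$ may be zero when $m < n-1$). A one- or two-line proof suffices, and I would present it as such.

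\begin{proof}
Every integer $k$ with $0 \le k \le m$ has a unique representation $k = qn + r$ with $q \in \mathbb{N} \cup \{0\}$ and $r \in \{0,1,\ldots,n-1\}$; that is, $k \bmod n = r$ for exactly one such $r$. Consequently the sets $\mathcal{I}_{(m,n,0)}, \ldots, \mathcal{I}_{(m,n,n-1)}$ partition $\{0,1,\ldots,m\}$, and therefore
\[
\sum_{r=0}^{n-1} p_{(r,n)}(x) = \sum_{r=0}^{n-1} \sum_{k \in \mathcal{I}_{(m,n,r)}} a_k x^k = \sum_{k=0}^{m} a_k x^k = p(x),
\]
as claimed.
\end{proof}
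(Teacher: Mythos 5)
Your proof is correct and follows exactly the paper's approach: both argue that the sets $\mathcal{I}_{(m,n,0)},\ldots,\mathcal{I}_{(m,n,n-1)}$ are pairwise disjoint with union $\{0,1,\ldots,m\}$, so the double sum collapses to $p(x)$. Your version is slightly more explicit (spelling out the division algorithm and the merged summation), but the substance is identical.
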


\begin{proof}
Follows from the fact that if $r \ne \hat{r}$, then $\mathcal{I}_{(m,n,r)} \cap \mathcal{I}_{(m,n,\hat{r})} = \emptyset$ and the fact that 
\[ \bigcup_{r=0}^{n-1} \mathcal{I}_{(m,n,r)} = \{0,1,\ldots,m\}. \qedhere \]
\end{proof}

Balaich and Ondrus offered the following as a generalization to the even-part and odd-part of a function. 

\begin{mydef}
    [{\cite[Definition 3]{bo2011}}]
If $f: \mathbb{C} \longrightarrow\mathbb{C}$, $r \in \{0,1,\dots,n-1\}$, and $\omega := \exp(2 \pi i/ n)$, then
\begin{equation}
\label{m2011function}
    f_{(r,\omega)}(z) := \frac{1}{n} \sum_{k=0}^{n-1} \omega^{-kr} f(\omega^k z)   
\end{equation}
is called the \emph{$r \bmod{n}$-part of $f$}.
\end{mydef}   

In the case of polynomials, the following result shows that the constructions yield the same function.

\begin{thm}
    \label{thm:unitypoly}
If $p \in \mathbb{C}[x]$, then $p_{(r,n)}(x) = p_{(r,\omega)}(x)$.
\end{thm}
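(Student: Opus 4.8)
The plan is to compute the right-hand side $p_{(r,\omega)}(x) = \frac{1}{n}\sum_{k=0}^{n-1} \omega^{-kr} p(\omega^k x)$ directly by expanding $p$ in its monomials and swapping the order of summation. Writing $p(x) = \sum_{j=0}^m a_j x^j$, we get
\[ p_{(r,\omega)}(x) = \frac{1}{n}\sum_{k=0}^{n-1} \omega^{-kr} \sum_{j=0}^m a_j \omega^{kj} x^j = \sum_{j=0}^m a_j x^j \left( \frac{1}{n} \sum_{k=0}^{n-1} \omega^{k(j-r)} \right). \]
So the whole theorem reduces to evaluating the inner geometric-type sum $\frac{1}{n}\sum_{k=0}^{n-1}\omega^{k(j-r)}$, where $\omega = \exp(2\pi i/n)$ is a primitive $n$-th root of unity.

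The key step is the standard orthogonality relation for roots of unity: $\frac{1}{n}\sum_{k=0}^{n-1}\omega^{k\ell} = 1$ if $\ell \equiv 0 \pmod n$ and $0$ otherwise. This follows because when $n \mid \ell$ every term is $1$, and when $n \nmid \ell$ the sum is $\frac{\omega^{n\ell}-1}{\omega^\ell - 1} = 0$ since $\omega^{n\ell} = 1$ while $\omega^\ell \ne 1$. Applying this with $\ell = j - r$, the coefficient of $a_j x^j$ survives precisely when $j \equiv r \pmod n$, i.e., when $j \in \mathcal{I}_{(m,n,r)}$, and is killed otherwise. Hence $p_{(r,\omega)}(x) = \sum_{j \in \mathcal{I}_{(m,n,r)}} a_j x^j = p_{(r,n)}(x)$ by Definition~\ref{indexsets}.

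There is no real obstacle here; the only things to be careful about are bookkeeping details: confirming that $0 \le r \le n-1$ guarantees the relevant residue class is represented correctly (so that $j-r$ can be any residue mod $n$, not shifted), and noting that the swap of finite sums is unconditionally valid. One could optionally phrase the root-of-unity sum as a lemma, but since it is entirely classical it suffices to invoke it inline. The argument also makes transparent why the two definitions agree only because $p$ is a polynomial (finite sum) — though in fact the same manipulation works for any entire function with an absolutely convergent power series, which is consistent with Balaich and Ondrus stating their definition for general $f$.
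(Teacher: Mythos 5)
Your proposal is correct and follows essentially the same route as the paper: expand $p(\omega^k x)$ into monomials, interchange the finite sums, and apply the orthogonality relation $\sum_{k=0}^{n-1}\omega^{k(j-r)} = n$ or $0$ according to whether $j \equiv r \pmod n$. No substantive differences to note.
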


\begin{proof} 
Because 
\[ \sum_{k=0}^{n-1} \omega^{jk} = \begin{cases}
n, & j \equiv 0\bmod{n} \\
0, & j \not \equiv 0\bmod{n}
\end{cases}, \]
it follows that
\begin{align*}
    p_{(r,\omega)}(x) 
    &= \frac{1}{n} \sum_{k=0}^{n-1} \omega^{-kr} p(\omega^k x) \\
    &= \frac{1}{n} \sum_{k=0}^{n-1} \omega^{-kr} \sum_{j=0}^m a_j \omega^{jk} x^j \\
    &= \frac{1}{n} \sum_{k=0}^{n-1} \sum_{j=0}^m a_j x^j \omega^{k(j-r)} \\
    &= \frac{1}{n} \sum_{j=0}^m a_j x^j \sum_{k=0}^{n-1} \omega^{k(j-r)} \\
    &= \frac{1}{n} \left( \sum_{j \in \mathcal{I}_{(m,n,r)}} a_j x^j \sum_{k=0}^{n-1} \omega^{k(j-r)} + \sum_{j \notin \mathcal{I}_{(m,n,r)}} a_j x^j \sum_{k=0}^{n-1} \omega^{k(j-r)} \right) \\
    &= \frac{1}{n} \left( \sum_{j \in \mathcal{I}_{(m,n,r)}} a_j x^j (n) + \sum_{j \notin \mathcal{I}_{(m,n,r)}} a_j x^j 0 \right) \\
    &= \sum_{j \in \mathcal{I}_{(m,n,r)}} a_j x^j = p_{(r,n)}(x). \qedhere
\end{align*}
\end{proof}

\section{Coefficients of polynomials in $\mathscr{P}_n$}

Clark and Paparella recently showed that the coefficients of any polynomial belonging to $\mathscr{P}_n$ must be real \cite[Corollary 3.4]{clark2021polynomials}. As such, hereinafter it is assumed that the coefficients of $p$ are real.

Bharali and Holtz \cite[Theorem 6]{bh2008} established the following result for entire functions using block upper-triangular matrices. We offer a simpler proof using Jordan blocks that also applies to {entire} functions.

\begin{thm}
    \label{thm:containDerivate}
If $p \in \mathscr{P}_n$, then $p, p^{(1)}, p^{(2)},\ldots, p^{(n-1)} \in \mathscr{P}_1$.
\end{thm}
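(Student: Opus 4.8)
The plan is to evaluate $p$ at a suitably chosen nonnegative matrix whose Jordan structure forces each derivative $p^{(j)}(\lambda)$ to appear as an entry of $p(A)$. The natural candidate is the $n$-by-$n$ Jordan block $J_n(\lambda)$ with $\lambda \ge 0$, which is entrywise nonnegative. Recall the standard fact that for any polynomial (indeed any function analytic at $\lambda$),
\[ p(J_n(\lambda)) = \begin{bmatrix}
p(\lambda) & p^{(1)}(\lambda) & \tfrac{p^{(2)}(\lambda)}{2!} & \cdots & \tfrac{p^{(n-1)}(\lambda)}{(n-1)!} \\
           & p(\lambda)       & p^{(1)}(\lambda)             &        & \vdots \\
           &                  & \ddots                       & \ddots & \tfrac{p^{(2)}(\lambda)}{2!} \\
           &                  &                              & p(\lambda) & p^{(1)}(\lambda) \\
           &                  &                              &            & p(\lambda)
\end{bmatrix}, \]
so that the $(1,j+1)$-entry of $p(J_n(\lambda))$ equals $p^{(j)}(\lambda)/j!$ for $0 \le j \le n-1$.

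First I would fix $j \in \{0,1,\dots,n-1\}$ and an arbitrary $\lambda \ge 0$, and set $A = J_n(\lambda)$, which satisfies $A \ge 0$. Since $p \in \mathscr{P}_n$, we have $p(A) \ge 0$; in particular the $(1,j+1)$-entry is nonnegative, giving $p^{(j)}(\lambda)/j! \ge 0$, hence $p^{(j)}(\lambda) \ge 0$. As $\lambda \ge 0$ was arbitrary, $p^{(j)}$ is nonnegative on $[0,\infty)$, which is precisely the statement that $p^{(j)} \in \mathscr{P}_1$ (a polynomial $q$ lies in $\mathscr{P}_1$ if and only if $q(a) \ge 0$ for every $a \ge 0$, since the $1$-by-$1$ nonnegative matrices are exactly the nonnegative reals). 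Letting $j$ range over $0,1,\dots,n-1$ yields $p, p^{(1)}, \dots, p^{(n-1)} \in \mathscr{P}_1$.

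The only real content is the formula for $p(J_n(\lambda))$, and I would include a short justification: it suffices to verify it for the monomials $p(x) = x^k$ by linearity, and for monomials it follows from the binomial-type expansion $J_n(\lambda)^k = (\lambda I + N)^k = \sum_{i} \binom{k}{i} \lambda^{k-i} N^i$ where $N$ is the nilpotent shift (the superdiagonal of ones), together with the observation that $N^i$ has ones on its $i$-th superdiagonal; matching the $(1,j+1)$-entry gives $\binom{k}{j}\lambda^{k-j} = (x^k)^{(j)}|_{x=\lambda}/j!$. I do not anticipate a genuine obstacle here; the mild subtlety worth flagging is the degenerate cases $n = 1$ (where the statement is vacuous beyond $p \in \mathscr{P}_1$) and $\deg p < n-1$ (where the higher derivatives are identically zero and trivially lie in $\mathscr{P}_1$), and the remark that the identical argument goes through verbatim for entire $f$ since $f(J_n(\lambda))$ is given by the same Jordan-block formula.
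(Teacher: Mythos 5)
Your proof is correct and follows the same route as the paper: both evaluate $p$ at the nonnegative Jordan block $J_n(\lambda)$ for $\lambda \ge 0$ and read off $p^{(j)}(\lambda)/j! \ge 0$ from the first row of $p(J_n(\lambda))$. The paper simply cites the Jordan-block formula from Horn and Johnson and stops there, whereas you also sketch its verification via monomials; the substance is identical.
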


\begin{proof}
The result follows by utilizing the well known fact \cite[p.~386]{hj1994} that 
\[
p(J_n(t))= \kbordermatrix{
        & 1      & \cdots & k                           & \cdots & n                            \\
 1      & p(t)   & \cdots & \frac{p^{(k-1)}(t)}{(k-1)!} & \cdots & \frac{p^{(n-1)}(t)}{(n-1)!}  \\
 \vdots &        & \ddots &                             & \ddots &                              \\
 k      &        &        & p(t)                        &        & \frac{p^{(k-1)}(t)}{(k-1)!}  \\
 \vdots &        &        &                             & \ddots &                              \\
 n      &        &        &                             &        & p(t)                         \\
}. \qedhere \]
\end{proof}

\begin{cor}
\label{cor:firstnterms}
If $p \in \mathscr{P}_n$ and $m \ge n-1$, then the first $n$ terms of $p$ are nonnegative; otherwise, if $m < n-1$, then all of its coefficients are nonnegative. 
\end{cor}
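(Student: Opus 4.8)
The plan is to deduce the corollary directly from Theorem~\ref{thm:containDerivate} together with the Pólya--Szegő characterization of $\mathscr{P}_1$, or more simply from the elementary fact that every $p \in \mathscr{P}_1$ has a nonnegative value and nonnegative derivatives of all orders at $0$. Indeed, if $p \in \mathscr{P}_1$, then for the $1$-by-$1$ matrix $[0]$ we get $p(0) = a_0 \ge 0$, and since the Pólya--Szegő form shows $p$ is nonnegative and nondecreasing on $[0,\infty)$ with all higher derivatives also lying in $\mathscr{P}_1$ (or, directly, since $p^{(j)} \in \mathscr{P}_1$ for each $j$ by Theorem~\ref{thm:containDerivate} applied iteratively), evaluating $p^{(j)}$ at $0$ gives $p^{(j)}(0) \ge 0$.

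First I would note that by Theorem~\ref{thm:containDerivate}, $p \in \mathscr{P}_n$ implies $p^{(j)} \in \mathscr{P}_1$ for every $j$ with $0 \le j \le n-1$, provided these derivatives are not identically zero; if $p^{(j)} \equiv 0$ the corresponding coefficient claim is vacuous. Then I would observe that any $q \in \mathscr{P}_1$ satisfies $q(0) \ge 0$, because the $1$-by-$1$ zero matrix is nonnegative and $q([0]) = [q(0)] \ge 0$. Applying this with $q = p^{(j)}$ yields $p^{(j)}(0) \ge 0$ for $0 \le j \le n-1$. Since $p(x) = \sum_{k=0}^m a_k x^k$ gives $p^{(j)}(0) = j!\, a_j$, we conclude $a_j \ge 0$ for all $j \in \{0,1,\ldots,n-1\}$ with $j \le m$.

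To finish, I would split into the two stated cases. If $m \ge n-1$, then all of $a_0, a_1, \ldots, a_{n-1}$ are defined and the argument above shows each is nonnegative, which is precisely the statement that the first $n$ terms of $p$ are nonnegative. If instead $m < n-1$, then the only coefficients of $p$ are $a_0, \ldots, a_m$ with $m \le n-2 < n-1$, so the range $0 \le j \le n-1$ covers every index $j \le m$; hence every coefficient of $p$ is nonnegative.

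I do not anticipate a genuine obstacle here, as the result is an immediate corollary: the one point deserving care is the degenerate situation where some derivative $p^{(j)}$ vanishes identically (so that Theorem~\ref{thm:containDerivate} does not literally place it in $\mathscr{P}_1$ as a nonzero polynomial), but in that case the coefficient $a_j$ is zero and the nonnegativity claim holds trivially. A second minor point is simply making sure the indexing matches the definition of ``first $n$ terms'' given in Section~2, namely the coefficients $a_0, \ldots, a_{n-1}$; the bound $m \ge n-1$ is exactly what guarantees these coefficients exist.
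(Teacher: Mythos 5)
Your proof is correct and follows essentially the same route as the paper: the paper's one-line argument ``follows from the fact that $a_k = p^{(k)}(0)/k! \ge 0$'' is exactly your application of Theorem~\ref{thm:containDerivate} together with the evaluation of each $p^{(j)}$ at the nonnegative $1$-by-$1$ matrix $[0]$. Your extra remarks on the identically-zero derivatives and the case split on $m$ versus $n-1$ merely make explicit what the paper leaves implicit.
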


\begin{proof}
Follows from the fact that \( a_k = {p^{(k)}(0)}/{k!} \ge 0\).
\end{proof}

\begin{obs}
\label{lem:operations}
If $p,q \in \mathscr{P}_n$ and $\alpha \ge 0$, then $\alpha p$, $p+q$, $pq$, and $p \circ q \in \mathscr{P}_n$.
\end{obs}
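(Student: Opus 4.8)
The plan is to verify each of the four closure properties directly from the definition of $\mathscr{P}_n$, namely that $p \in \mathscr{P}_n$ iff $p(A) \ge 0$ for every $A \in \mathsf{M}_n(\mathbb{R})$ with $A \ge 0$. Fix such a matrix $A$. For the scalar multiple $\alpha p$ with $\alpha \ge 0$, observe that $(\alpha p)(A) = \alpha \, p(A)$, which is a nonnegative scalar times a nonnegative matrix, hence nonnegative. For the sum, $(p+q)(A) = p(A) + q(A)$ is a sum of two nonnegative matrices, hence nonnegative.

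For the product $pq$, the key point is that $(pq)(A) = p(A)\,q(A)$, since polynomial multiplication corresponds to multiplication of the matrices obtained by substitution (both $p(A)$ and $q(A)$ are polynomials in the single matrix $A$, so they commute and their product is the substitution of $pq$). The product of two entrywise-nonnegative matrices is again entrywise-nonnegative, because each entry of $p(A)q(A)$ is a sum of products of nonnegative entries. For the composition $p \circ q$, set $B := q(A)$; since $q \in \mathscr{P}_n$, we have $B \ge 0$, and then $(p \circ q)(A) = p(q(A)) = p(B) \ge 0$ because $p \in \mathscr{P}_n$ and $B$ is an $n$-by-$n$ nonnegative matrix. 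Since $A$ was arbitrary, all four polynomials lie in $\mathscr{P}_n$.

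None of these steps presents a genuine obstacle; the only point meriting care is the bookkeeping identity $(pq)(A) = p(A)q(A)$ and $(p\circ q)(A) = p(q(A))$, which hold because evaluation at a fixed matrix $A$ is a ring homomorphism from $\mathbb{C}[x]$ into the commutative subalgebra $\mathbb{C}[A] \subseteq \mathsf{M}_n(\mathbb{C})$. I expect to state this briefly and leave the entrywise nonnegativity of products and sums of nonnegative matrices as immediate. Consequently the proof is short enough to present essentially in full rather than as a sketch.

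\begin{proof}
Let $A \in \mathsf{M}_n(\mathbb{R})$ with $A \ge 0$. Evaluation at $A$ is a ring homomorphism from $\mathbb{C}[x]$ into the commutative algebra $\mathbb{C}[A]$, so $(p+q)(A) = p(A) + q(A)$, $(pq)(A) = p(A)q(A)$, $(\alpha p)(A) = \alpha\, p(A)$, and $(p \circ q)(A) = p(q(A))$. Since $p,q \in \mathscr{P}_n$, the matrices $p(A)$ and $q(A)$ are nonnegative; as $\alpha \ge 0$, so is $\alpha\, p(A)$, and the sum $p(A)+q(A)$ and product $p(A)q(A)$ of nonnegative matrices are nonnegative. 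Finally, $B := q(A) \ge 0$ is an $n$-by-$n$ nonnegative matrix, so $p(q(A)) = p(B) \ge 0$. As $A$ was arbitrary, $\alpha p$, $p+q$, $pq$, and $p \circ q$ all belong to $\mathscr{P}_n$.
\end{proof}
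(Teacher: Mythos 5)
Your proof is correct and follows essentially the same route as the paper, which likewise reduces the claim to the identities $(p+q)(A)=p(A)+q(A)$, $(pq)(A)=p(A)q(A)$, and $(p\circ q)(A)=p(q(A))$ (cited there from Higham) together with the closure of nonnegative matrices under sums, products, and nonnegative scaling. Your version merely spells out these routine verifications in more detail.
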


\begin{proof}
Follows from the fact that $(p+q)(A) = p(A) + q(A)$ \cite[Theorem 1.15]{h2008}, $(pq)(A) = p(A) q(A) $ \cite[Theorem 1.15]{h2008}, and $ (p\circ q)(A) =  p(q(A))$ \cite[Theorem 1.17]{h2008}.  
\end{proof}

Since $C^n = I$ and $C$ is invertible, it follows by the division algorithm that $C^k = C^r$, where $r = k\bmod{n}$. 

\begin{prop}
    \label{prop:scaledcirculant}
If $p \in \mathbb{C}[x]$ and $t \in \mathbb{C}$, then 
\[ p(tC) = \sum_{r=0}^{n-1} p_{(r,n)}(t) C^r = \circulant\left( p_{(0,n)}(t), p_{(1,n)}(t),\dots, p_{(n-1,n)} (t)\right). \]
\end{prop}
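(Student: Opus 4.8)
The plan is to expand $p(tC)$ directly in terms of the powers of $C$ and then use the reduction $C^k = C^{k \bmod n}$ to collect terms. Writing $p(x) = \sum_{k=0}^m a_k x^k$, we have
\[ p(tC) = \sum_{k=0}^m a_k t^k C^k = \sum_{k=0}^m a_k t^k C^{k \bmod n}, \]
where the last equality uses the fact, noted just before the statement, that $C^k = C^r$ with $r = k \bmod n$. The next step is to reorganize this single sum as a double sum indexed first by the residue $r \in \{0,1,\ldots,n-1\}$ and then by the indices $k$ with $k \bmod n = r$; that is, by the index set $\mathcal{I}_{(m,n,r)}$ of Definition~\ref{indexsets}. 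This regrouping is exactly the partition used in Observation~\ref{obs:rpartsum}, so it is legitimate.

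Carrying out the regrouping gives
\[ p(tC) = \sum_{r=0}^{n-1} \left( \sum_{k \in \mathcal{I}_{(m,n,r)}} a_k t^k \right) C^r = \sum_{r=0}^{n-1} p_{(r,n)}(t)\, C^r, \]
where the inner sum is recognized as $p_{(r,n)}(t)$ by Definition~\ref{indexsets}. This establishes the first equality in the statement. For the second equality, I would recall from the background section that a polynomial in $C$ of degree at most $n-1$, namely $\sum_{r=0}^{n-1} v_{r+1} C^r$, is the circulant matrix with reference vector $(v_1,\ldots,v_n)$; applying this with $v_{r+1} = p_{(r,n)}(t)$ yields
\[ \sum_{r=0}^{n-1} p_{(r,n)}(t)\, C^r = \circulant\!\left( p_{(0,n)}(t), p_{(1,n)}(t), \ldots, p_{(n-1,n)}(t) \right), \]
completing the proof.

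There is no real obstacle here; the only point requiring a modicum of care is the interchange of the order of summation and the justification that every term $a_k t^k C^k$ is accounted for exactly once in the regrouped double sum, which is immediate from the disjointness and covering properties of the sets $\mathcal{I}_{(m,n,r)}$ already recorded in Observation~\ref{obs:rpartsum}. The identification of $\sum_{r=0}^{n-1} p_{(r,n)}(t) C^r$ with the displayed circulant is purely a matter of invoking the stated fact that $q(C) = \circulant(v_1,\ldots,v_n)$ when $q(x) = \sum_{k=0}^{n-1} v_{k+1} x^k$, noting that the scalars $p_{(r,n)}(t)$ play the role of the reference-vector entries.
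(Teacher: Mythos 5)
Your proof is correct and follows essentially the same route as the paper's: both arguments reduce to partitioning the monomials of $p$ by residue class modulo $n$ (the content of Observation~\ref{obs:rpartsum}), applying $C^k = C^{k\bmod n}$, and then invoking the identification of $\sum_{r=0}^{n-1} v_{r+1}C^r$ with $\circulant(v_1,\ldots,v_n)$. The only cosmetic difference is that the paper first splits $p(tC)=\sum_r p_{(r,n)}(tC)$ and then expands each part, whereas you expand first and regroup afterward.
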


\begin{proof}
By Observation \ref{obs:rpartsum} and the fact that $(p+q)(A) = p(A) + q(A)$, 
\begin{align*}
    p(tC) 
    = \sum_{r=0}^{n-1} p_{(r,n)} (tC)                                                                              
    &= \sum_{r=0}^{n-1} \sum_{k \in \mathcal{I}_r} a_k t^k C^k                              \\
    &= \sum_{r=0}^{n-1} \sum_{k \in \mathcal{I}_r} a_k t^k  C^r                             \\
    &= \sum_{r=0}^{n-1} p_{(r,n)}(t) C^r                                                    \\
    &= \text{circ}\left( p_{(0,n)}(t), p_{(1,n)}(t),\dots, p_{(n-1,n)} (t)\right).   \qedhere
\end{align*}
\end{proof}

\begin{cor}
    \label{cor:polyparts}
If $p \in \mathscr{P}_n$, then $p_{(r,n)} \in \mathscr{P}_1,~\forall r \in \{0,1,\dots,n-1\}$.
\end{cor}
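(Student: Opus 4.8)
The plan is to leverage Proposition~\ref{prop:scaledcirculant}, which evaluates $p$ at a scaled circulant matrix and expresses the result as a circulant whose reference vector consists precisely of the values $p_{(r,n)}(t)$. The key observation is that if $t \ge 0$, then $tC \ge 0$ (since $C \ge 0$), so $p \in \mathscr{P}_n$ forces $p(tC) \ge 0$; by the proposition this matrix is $\circulant\!\left(p_{(0,n)}(t),\dots,p_{(n-1,n)}(t)\right)$, and a circulant matrix is nonnegative exactly when its reference vector is nonnegative. Hence every entry $p_{(r,n)}(t)$ is nonnegative for every $t \ge 0$ and every $r$, which is precisely the statement that $p_{(r,n)} \in \mathscr{P}_1$.

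Concretely, the steps I would carry out are: first, fix $r \in \{0,1,\dots,n-1\}$ and an arbitrary real $t \ge 0$; second, note $tC$ is an $n$-by-$n$ nonnegative matrix, so $p(tC) \ge 0$ by hypothesis; third, invoke Proposition~\ref{prop:scaledcirculant} to identify the $(1,r+1)$-entry (equivalently, the $(r+1)$-st component of the reference vector) of $p(tC)$ as $p_{(r,n)}(t)$; fourth, conclude $p_{(r,n)}(t) \ge 0$. Since $t \ge 0$ was arbitrary, $p_{(r,n)}$ is a real polynomial that is nonnegative on $[0,\infty)$, i.e.\ $p_{(r,n)} \in \mathscr{P}_1$.

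I do not anticipate a serious obstacle here: the heavy lifting is already done in Proposition~\ref{prop:scaledcirculant}, and the only remaining points are the elementary facts that $C \ge 0$, that a nonnegative scalar times a nonnegative matrix is nonnegative, and that a circulant is nonnegative if and only if its reference vector is. The one thing worth stating carefully is the identification $\mathscr{P}_1 = \{ q \in \mathbb{R}[x] \mid q(t) \ge 0,\ \forall t \ge 0\}$, which follows because a $1$-by-$1$ nonnegative matrix is just a nonnegative real number; this is what lets us pass from ``$p_{(r,n)}(t) \ge 0$ for all $t \ge 0$'' to ``$p_{(r,n)} \in \mathscr{P}_1$.'' If desired, one could also note that this corollary strengthens Corollary~\ref{cor:firstnterms} in a different direction, since it constrains not just individual low-order coefficients but entire residue-class subsums of $p$.
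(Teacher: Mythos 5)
Your argument is correct and is essentially identical to the paper's: both apply Proposition~\ref{prop:scaledcirculant} to $p(tC)$ for $t \ge 0$, use $p \in \mathscr{P}_n$ to conclude the resulting circulant is nonnegative, and read off $p_{(r,n)}(t) \ge 0$ entrywise for all $t \ge 0$. Your additional remarks (why $tC \ge 0$, the identification of $\mathscr{P}_1$ with polynomials nonnegative on $[0,\infty)$) just make explicit what the paper leaves implicit.
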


\begin{proof}
If $p \in \mathscr{P}_n$ and $t \ge 0$, then 
\[p(tC) = \circulant\left(p_{(0,n)}(t), \ldots, p_{(n-1,n)}(t)\right) \ge 0, \]
and the latter holds if and only if $p_{(r,n)} \in \mathscr{P}_1$, $\forall r \in \{0,1,\dots,n-1\}$.
\end{proof}

\begin{cor}
    \label{cor:modcoeff}
If $p \in \mathscr{P}_n$, then   
\[  \sum_{k \in \mathcal{I}_{(m,n,r)}} a_k \geq 0,~\forall r \in \{0,1,\ldots,n-1\}.\]
\end{cor}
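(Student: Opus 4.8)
The plan is to deduce this immediately from Corollary~\ref{cor:polyparts} by evaluating each $r\bmod n$-part at a specific nonnegative real number. First I would recall that Corollary~\ref{cor:polyparts} gives $p_{(r,n)} \in \mathscr{P}_1$ for every $r \in \{0,1,\dots,n-1\}$, and that membership in $\mathscr{P}_1$ means precisely that $p_{(r,n)}(A) \ge 0$ for every nonnegative $1$-by-$1$ matrix $A$, i.e.\ $p_{(r,n)}(t) \ge 0$ for all real $t \ge 0$.

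Next I would simply take $t = 1$. Since $\mathcal{I}_{(m,n,r)} = \{\,0 \le k \le m \mid k \bmod n = r\,\}$ and $p_{(r,n)}(x) = \sum_{k \in \mathcal{I}_{(m,n,r)}} a_k x^k$, evaluating at $t=1$ yields
\[
p_{(r,n)}(1) = \sum_{k \in \mathcal{I}_{(m,n,r)}} a_k \cdot 1^k = \sum_{k \in \mathcal{I}_{(m,n,r)}} a_k,
\]
and this quantity is nonnegative because $1 \ge 0$ and $p_{(r,n)} \in \mathscr{P}_1$. Since $r$ was arbitrary in $\{0,1,\dots,n-1\}$, the stated inequality holds for all such $r$, completing the argument.

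There is really no main obstacle here: the corollary is a one-line consequence of the preceding one, and the only thing to be careful about is that the coefficients $a_k$ are real (already granted by the convention stated after Corollary~\ref{cor:firstnterms}, so that ``$\ge 0$'' is meaningful) and that the sum over $\mathcal{I}_{(m,n,r)}$ may be empty for some $r$ when $m < n-1$, in which case the inequality reads $0 \ge 0$ and holds trivially. One could alternatively invoke Proposition~\ref{prop:scaledcirculant} directly with $t=1$, observing that $p(C) = \circulant\!\left(p_{(0,n)}(1),\dots,p_{(n-1,n)}(1)\right) \ge 0$ forces every entry of the reference vector to be nonnegative; this is essentially the same proof.
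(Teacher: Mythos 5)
Your proposal is correct and matches the paper's proof: both deduce from Corollary~\ref{cor:polyparts} that $p_{(r,n)} \in \mathscr{P}_1$ and then evaluate $p_{(r,n)}(1) = \sum_{k \in \mathcal{I}_{(m,n,r)}} a_k \ge 0$. Your additional remarks about empty index sets and real coefficients are fine but not needed.
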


\begin{proof}
If $p \in \mathscr{P}_n$, then 
\[ \sum_{k \in \mathcal{I}_{(m,n,r)}} a_k = p_{(r,n)}(1) \geq 0,~\forall r \in \{0,1,\ldots,n-1\}. \qedhere \]
\end{proof}

\begin{rem}
Since $a_r = p_{(r,n)} (0) \ge 0$, Corollary \ref{cor:polyparts} yields another proof of Corollary \ref{cor:firstnterms}, which can also be established by examining $p(J_n(0))$ \cite[Proposition 2]{bh2008}.
\end{rem}

\begin{lem}
    \label{lem:contain}
    If $n \in \mathbb{N}$, then $\mathscr{P}_{n+1} \subseteq \mathscr{P}_n$.
\end{lem}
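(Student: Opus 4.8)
The plan is to exhibit, for each $n$-by-$n$ nonnegative matrix $A$, a corresponding $(n+1)$-by-$(n+1)$ nonnegative matrix whose behavior under $p$ ``contains'' that of $A$, so that nonnegativity of $p$ on the larger matrix forces nonnegativity of $p(A)$. The natural choice is to pad $A$ with a row and column of zeros, i.e.\ to set
\[
B := \begin{bmatrix} A & 0 \\ 0 & 0 \end{bmatrix} \in \mathsf{M}_{n+1}(\mathbb{R}).
\]
Then $B \ge 0$, and since $B$ is block-diagonal, $p(B) = \begin{bmatrix} p(A) & 0 \\ 0 & p(0) \end{bmatrix}$ (this is the standard fact that a polynomial acts blockwise on block-diagonal matrices, already invoked in the excerpt via $(pq)(A)=p(A)q(A)$ and the like). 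If $p \in \mathscr{P}_{n+1}$, then $p(B) \ge 0$, and reading off the leading $n$-by-$n$ block gives $p(A) \ge 0$. Since $A \ge 0$ was arbitrary, $p \in \mathscr{P}_n$, establishing $\mathscr{P}_{n+1} \subseteq \mathscr{P}_n$.

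First I would record that $B \ge 0$ whenever $A \ge 0$, which is immediate. Next I would justify the block computation $p(B) = p(A) \oplus p(0)$: since $B^k = A^k \oplus 0$ for every $k \ge 1$ (and $B^0 = I_n \oplus 1$), summing $\sum_k a_k B^k$ against the coefficients of $p$ gives the claimed block-diagonal form; alternatively cite Observation \ref{lem:operations}-style functional-calculus facts. Then the final step is simply the implication: $p \in \mathscr{P}_{n+1} \implies p(B) \ge 0 \implies p(A) \ge 0$ by restricting to the top-left block.

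There is essentially no obstacle here; the only point requiring a line of care is the verification that padding with zeros commutes with applying $p$, and in particular handling the constant term correctly (the bottom-right entry of $p(B)$ is $p(0)=a_0$, which is harmless and need not even be nonnegative a priori, though Corollary \ref{cor:firstnterms} shows it is). If one prefers to avoid the constant-term bookkeeping entirely, an equally clean alternative is to take $B = A \oplus \lambda$ for a suitable scalar $\lambda \ge 0$, but the zero padding is the most transparent. I would present the zero-padding argument.
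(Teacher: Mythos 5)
Your proposal is correct and is essentially the paper's own argument (borrowed from Bharali and Holtz): embed $A$ as $B=\diag(A,0)\in\mathsf{M}_{n+1}(\mathbb{R})$, use the blockwise action of $p$, and read off the leading block. Your remark that the bottom-right entry is $p(0)=a_0$ rather than $0$ is in fact slightly more careful than the paper's statement $p(B)=\diag(p(A),0)$, but the argument is the same.
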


\begin{proof}
For completeness, we repeat the argument given by Bharali and Holtz \cite[Lemma 1]{bh2008} for the containment $\mathscr{F}_{n+1} \subseteq \mathscr{F}_n$: let $A$ be a nonnegative matrix of order $n$ and let $p \in \mathscr{P}_{n+1}$. If $B : = \diag(A, 0) \in \mathsf{M}_{n+1} (\mathbb{R})$, then $p(B) = \diag(p(A), 0)$ \cite[Theorem 1.13(g)]{h2008}. Since $p(B) \ge 0$, it follows that $p(A) \ge 0$, i.e., $p \in \mathscr{P}_n$. 
\end{proof}

\begin{cor}
If $p \in \mathscr{P}_n$, then $p_{(r,\hat{n})} \in \mathscr{P}_1$ for every $\hat{n} \in \{1,\ldots, n\}$ and for every $r \in \{0,1,\dots,\hat{n}-1\}$.
\end{cor}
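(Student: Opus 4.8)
The statement is an immediate consequence of the two facts established just above it: the nesting $\mathscr{P}_{n+1} \subseteq \mathscr{P}_n$ of Lemma~\ref{lem:contain}, and the conclusion of Corollary~\ref{cor:polyparts} that membership in $\mathscr{P}_{\hat n}$ forces each $\hat n$-modular part to lie in $\mathscr{P}_1$. So the plan is: first promote Lemma~\ref{lem:contain} to the chain $\mathscr{P}_n \subseteq \mathscr{P}_{n-1} \subseteq \cdots \subseteq \mathscr{P}_{\hat n}$ by a trivial induction on $n - \hat n$, concluding $p \in \mathscr{P}_{\hat n}$ for every $\hat n \in \{1,\ldots,n\}$; then invoke Corollary~\ref{cor:polyparts} with $n$ replaced by $\hat n$ to obtain $p_{(r,\hat n)} \in \mathscr{P}_1$ for every $r \in \{0,1,\ldots,\hat n - 1\}$.

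\textbf{Steps, in order.} (1) Observe that iterating Lemma~\ref{lem:contain} gives $\mathscr{P}_n \subseteq \mathscr{P}_{\hat n}$ whenever $1 \le \hat n \le n$; hence $p \in \mathscr{P}_{\hat n}$. (2) Fix such an $\hat n$ and apply Corollary~\ref{cor:polyparts}: since $p \in \mathscr{P}_{\hat n}$, evaluating at $tC_{\hat n}$ for $t \ge 0$ and using Proposition~\ref{prop:scaledcirculant} shows $\circulant(p_{(0,\hat n)}(t),\ldots,p_{(\hat n - 1,\hat n)}(t)) \ge 0$, which is equivalent to $p_{(r,\hat n)}(t) \ge 0$ for all $r$ and all $t \ge 0$, i.e.\ $p_{(r,\hat n)} \in \mathscr{P}_1$. (3) Since $\hat n$ was arbitrary in $\{1,\ldots,n\}$, the claim follows.

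\textbf{Main obstacle.} There is essentially none: the only thing to be careful about is that Corollary~\ref{cor:polyparts} is stated for $\mathscr{P}_n$, so one must explicitly note that it applies verbatim with $n$ replaced by any $\hat n \le n$, which is legitimate because its proof only used that $p$ preserves $\hat n$-by-$\hat n$ nonnegative matrices (in particular the circulant $tC_{\hat n}$). The degenerate cases $\hat n = 1$ (where $p_{(0,1)} = p$ and the statement reduces to $p \in \mathscr{P}_1$) and $\hat n = n$ (the content of Corollary~\ref{cor:polyparts} itself) are subsumed.

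\begin{proof}
By Lemma~\ref{lem:contain} and induction on $n - \hat n$, we have $\mathscr{P}_n \subseteq \mathscr{P}_{\hat n}$ for every $\hat n \in \{1,\ldots,n\}$; hence $p \in \mathscr{P}_{\hat n}$. Applying Corollary~\ref{cor:polyparts} with $n$ replaced by $\hat n$ (its proof uses only that $p$ preserves the $\hat n$-by-$\hat n$ nonnegative circulants $tC_{\hat n}$, $t \ge 0$, via Proposition~\ref{prop:scaledcirculant}), we conclude $p_{(r,\hat n)} \in \mathscr{P}_1$ for every $r \in \{0,1,\ldots,\hat n - 1\}$. Since $\hat n$ was arbitrary, the result follows.
\end{proof}
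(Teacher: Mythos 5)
Your proposal is correct and matches the paper's proof, which simply cites Corollary~\ref{cor:polyparts} and Lemma~\ref{lem:contain}; you merely spell out the iteration of the containment lemma and the re-indexing of the corollary, which the paper leaves implicit.
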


\begin{proof}
Immediate from Corollary \ref{cor:polyparts} and Lemma \ref{lem:contain}.
\end{proof}

\begin{cor}
If $p \in \mathscr{P}_n$, then   
\[  \sum_{k \in \mathcal{I}_{(m,\hat{n},r)}} a_k \geq 0, \]
for every $\hat{n} \in \{1,\ldots, n\}$ and for every $r \in \{0,1,\dots,\hat{n}-1\}$.
\end{cor}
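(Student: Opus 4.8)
The plan is to deduce this inequality from the immediately preceding corollary together with the elementary fact that membership in $\mathscr{P}_1$ is exactly nonnegativity on the nonnegative reals. First I would invoke the previous corollary: since $p \in \mathscr{P}_n$, we have $p_{(r,\hat n)} \in \mathscr{P}_1$ for every $\hat n \in \{1,\ldots,n\}$ and every $r \in \{0,1,\dots,\hat n-1\}$. (That corollary itself rests on Corollary \ref{cor:polyparts} and Lemma \ref{lem:contain}, the latter iterated to give $\mathscr{P}_n \subseteq \mathscr{P}_{\hat n}$ whenever $\hat n \le n$.)

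Next I would recall that a $1$-by-$1$ nonnegative matrix is simply a nonnegative real number, so $q \in \mathscr{P}_1$ means precisely that $q(t) \ge 0$ for all $t \ge 0$. Applying this with $q = p_{(r,\hat n)}$ and $t = 1$, and then unwinding Definition \ref{indexsets}, yields
\[ \sum_{k \in \mathcal{I}_{(m,\hat n,r)}} a_k = p_{(r,\hat n)}(1) \ge 0, \]
which is exactly the claimed bound. This is nothing more than the argument of Corollary \ref{cor:modcoeff} carried out inside $\mathscr{P}_{\hat n}$ in place of $\mathscr{P}_n$.

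Because every ingredient is already in place, there is no genuine obstacle here; the only point deserving a word of care is bookkeeping on the index set, namely that $\mathcal{I}_{(m,\hat n,r)}$ is formed with the same degree bound $m$ as in the statement, so that the displayed sum really is the value $p_{(r,\hat n)}(1)$ and not a truncation of it. With that noted, the corollary follows immediately.
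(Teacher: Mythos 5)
Your argument is correct and is essentially the paper's own: the paper derives the claim immediately from Lemma \ref{lem:contain} (giving $\mathscr{P}_n \subseteq \mathscr{P}_{\hat n}$) and Corollary \ref{cor:modcoeff} applied at order $\hat n$, which is exactly the chain you unwind via $p_{(r,\hat n)}(1) \ge 0$. Your remark about the degree bound $m$ in $\mathcal{I}_{(m,\hat n,r)}$ is a fine point of bookkeeping but introduces no new content.
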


\begin{proof}
Immediate from Corollary \ref{cor:modcoeff} and Lemma \ref{lem:contain}.
\end{proof}

\begin{thm}
\label{thm:lastnterms}
If $p \in \mathscr{P}_n$ and $\deg p > n$, then the last $n$ terms are nonnegative.
\end{thm}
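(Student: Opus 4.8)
The plan is to recover the last $n$ coefficients of $p$ from the $r\bmod n$-parts $p_{(0,n)},\ldots,p_{(n-1,n)}$, which all lie in $\mathscr{P}_1$ by Corollary~\ref{cor:polyparts}. Write $m := \deg p$ (so $m > n$ by hypothesis). First I would make a combinatorial observation: for each $i \in \{0,1,\ldots,n-1\}$, set $r_i := (m-i)\bmod n$; then $m-i$ is the \emph{largest} element of $\mathcal{I}_{(m,n,r_i)}$, since $m-i \le m$ while any strictly larger member of that residue class is at least $m-i+n > m$. Hence $a_{m-i}$ is the coefficient of the top-degree monomial of $p_{(r_i,n)}$, and as $i$ ranges over $\{0,\ldots,n-1\}$ the residues $r_i$ range over all of $\{0,\ldots,n-1\}$ while $m-i$ ranges over $\{m-n+1,\ldots,m\}$. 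Thus the last $n$ terms of $p$ are exactly the ``leading'' coefficients of $p_{(0,n)},\ldots,p_{(n-1,n)}$.

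The second step is to show that each such coefficient is nonnegative. Since $p_{(r_i,n)} \in \mathscr{P}_1$, we have $p_{(r_i,n)}(t) \ge 0$ for all $t \ge 0$; dividing $p_{(r_i,n)}(t) = \sum_{k \in \mathcal{I}_{(m,n,r_i)}} a_k t^k$ by $t^{m-i}$ and letting $t \to \infty$ expresses $a_{m-i}$ as a limit of nonnegative quantities (all lower-degree terms vanish in the limit), so $a_{m-i} \ge 0$; this works uniformly even when $p_{(r_i,n)}$ happens to be the zero polynomial. Put differently, a polynomial in $\mathscr{P}_1$ is nonnegative on $[0,\infty)$, so its leading coefficient cannot be negative. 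Combining the two steps yields that $a_{m-n+1},\ldots,a_m$ are all nonnegative.

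The only point I expect to need care is the indexing in the first step — verifying that the $n$ consecutive exponents $m-n+1,\ldots,m$ meet every residue class modulo $n$ exactly once, and that the representative obtained this way is genuinely the highest exponent appearing in the corresponding part; once that is pinned down, the rest is the one-line limit above. The hypothesis $\deg p > n$ ensures $m \ge n$, so that ``the last $n$ terms'' is meaningful and each $\mathcal{I}_{(m,n,r_i)}$ is nonempty with the stated maximal element.
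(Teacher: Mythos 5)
Your proposal is correct and follows essentially the same route as the paper: both arguments observe that the $n$ consecutive exponents $m-n+1,\ldots,m$ form a complete residue system modulo $n$, so each of the last $n$ coefficients is the top-degree coefficient of one of the parts $p_{(r,n)}$, which are nonnegative on $[0,\infty)$ by the circulant evaluation $p(tC)\ge 0$. The paper phrases the final step as a contradiction (choose $t$ large so a part goes negative) while you take a limit of $p_{(r_i,n)}(t)/t^{m-i}$, but these are the same idea.
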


\begin{proof}
By Proposition \ref{prop:scaledcirculant}, if $t \in \mathbb{R}$, then 
\[ p(tC) = \circulant(p_{(0,n)}(t), \ldots, p_{(n-1,n)}(t)) \ge 0. \]
For contradiction, suppose that $a_{m-n+k} < 0$, where $1 \le k \le n$. The $n$ terms 
\[ {m-n+1}, \ldots, {m-1}, m \] 
form a \emph{complete residue system modulo $n$} since they are consecutive; as such, if $r = (m-n+k)\bmod{n}$, then $a_{m-n+k}$ is the leading coefficient of $p_{(r,n)}$. Since $a_{m-n+k} < 0$, we may select $t$ large enough such that $p_{(r,n)}(t) < 0$, a contradiction. Thus, the last $n$ terms of $p$ must be nonnegative.
\end{proof}

\begin{thm}
    \label{thm:degcond}
If $p \in \mathbb{C}[x]$ and $\deg p < 2n$, then $p \in \mathscr{P}_n$ if and only if all of the coefficients are nonnegative.
\end{thm}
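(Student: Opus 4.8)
The plan is to prove the nontrivial implication; the converse is immediate, since if every coefficient $a_k$ is nonnegative then $p(A) = \sum_k a_k A^k$ is a nonnegative combination of nonnegative matrices whenever $A \ge 0$. So suppose $p \in \mathscr{P}_n$ and set $m := \deg p$, so that $m \le 2n-1$. I want to show $a_k \ge 0$ for every $k$, and the idea is to split the index set $\{0,1,\dots,m\}$ into the ``first $n$'' indices $\{0,\dots,n-1\}$ and the remaining indices $\{n,\dots,m\}$, controlling each block with a result already in hand.

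First I would dispose of the low indices: if $m < n-1$, then Corollary \ref{cor:firstnterms} already yields $a_k \ge 0$ for all $k$; and if $m \ge n-1$, the same corollary gives $a_0,\dots,a_{n-1} \ge 0$, leaving only $a_n,\dots,a_m$ (an empty list when $m < n$, in which case we are done).

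For the high indices, fix $j$ with $n \le j \le m$. Because $m \le 2n-1$ we have $0 \le j-n \le n-1$, so $r := j \bmod n$ equals $j-n$; moreover the next index congruent to $r$ modulo $n$ is $j+n \ge 2n > m$, hence $j = \max \mathcal{I}_{(m,n,r)}$ and $a_j$ is the top-degree coefficient of the $r\bmod n$-part $p_{(r,n)}$. By Corollary \ref{cor:polyparts} we have $p_{(r,n)} \in \mathscr{P}_1$, i.e.\ $p_{(r,n)}$ is nonnegative on $[0,\infty)$; a univariate polynomial with that property cannot have a negative top-degree coefficient, for otherwise it would tend to $-\infty$. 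Thus $a_j \ge 0$, and letting $j$ range over $\{n,\dots,m\}$ completes the proof.

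I do not expect a real obstacle here; the single point that needs care is the boundary value $m = n$, where Theorem \ref{thm:lastnterms} does not literally apply (it assumes $\deg p > n$). Running the $p_{(r,n)}$ top-coefficient argument uniformly over all $j \in \{n,\dots,m\}$ — instead of quoting Theorem \ref{thm:lastnterms} for the upper block — avoids this, and it is precisely the hypothesis $\deg p < 2n$ that forces each residue class modulo $n$ to meet $\{0,\dots,m\}$ in at most two points, so that its lower coefficient is pinned down by Corollary \ref{cor:firstnterms} and its upper coefficient by Corollary \ref{cor:polyparts}.
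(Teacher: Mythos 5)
Your proof is correct and follows essentially the same route as the paper: Corollary \ref{cor:firstnterms} for the coefficients $a_0,\dots,a_{n-1}$, and the circulant/leading-coefficient argument behind Theorem \ref{thm:lastnterms} (via Corollary \ref{cor:polyparts}) for the coefficients $a_n,\dots,a_m$, which the hypothesis $\deg p<2n$ makes exhaustive. The one genuine added value is your handling of the boundary case $\deg p=n$: the paper simply cites Theorem \ref{thm:lastnterms}, whose stated hypothesis $\deg p>n$ excludes that case, whereas your inlined argument (each $a_j$ with $j\ge n$ is the top coefficient of some $p_{(r,n)}\in\mathscr{P}_1$) covers it uniformly.
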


\begin{proof}
Sufficiency is clear and necessity follows from Corollary \ref{cor:firstnterms} and Theorem \ref{thm:lastnterms}.
\end{proof}

\section{Concluding Remarks}

In addition to determining whether conditions presented previously are sufficient (or to disprove otherwise), we offer additional results and several other lines of inquiry. 



\begin{thm}
$\mathscr{P}_2 \subset \mathscr{P}_1$. 
\end{thm}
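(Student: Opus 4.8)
The plan is to combine Lemma \ref{lem:contain} with a single explicit separating example. By Lemma \ref{lem:contain} we already have $\mathscr{P}_2 \subseteq \mathscr{P}_1$, so the only thing left to establish is that the containment is proper, i.e., that there is a polynomial lying in $\mathscr{P}_1$ but not in $\mathscr{P}_2$.

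A convenient witness is $p(x) = x^2 - x + 1$. First I would check that $p \in \mathscr{P}_1$: since $p(x) = \left(x - \tfrac{1}{2}\right)^2 + \tfrac{3}{4} > 0$ for every $x \in \mathbb{R}$, we certainly have $p(t) \ge 0$ for all $t \ge 0$; equivalently this is the P\'olya--Szeg\"o representation with $f_1(x) = x - \tfrac{1}{2}$, $f_2(x) = \tfrac{\sqrt{3}}{2}$, and $g_1 = g_2 = 0$. Next I would show $p \notin \mathscr{P}_2$. Since $\deg p = 2 \ge n - 1 = 1$, Corollary \ref{cor:firstnterms} forces the first two coefficients of any element of $\mathscr{P}_2$ to be nonnegative; but $a_1 = -1 < 0$, so $p \notin \mathscr{P}_2$. (One could equally invoke Theorem \ref{thm:degcond}, as $\deg p = 2 < 2n = 4$.) A more concrete alternative, which also explains the failure, is to evaluate $p$ at the nonnegative nilpotent Jordan block $J_2(0)$: the formula used in Theorem \ref{thm:containDerivate} gives $p(J_2(0)) = \begin{bmatrix} p(0) & p'(0) \\ 0 & p(0) \end{bmatrix} = \begin{bmatrix} 1 & -1 \\ 0 & 1 \end{bmatrix}$, which is not nonnegative. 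Putting the two facts together with Lemma \ref{lem:contain} yields $\mathscr{P}_2 \subsetneq \mathscr{P}_1$.

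I do not expect a genuine obstacle here: the coefficient obstructions developed in the previous section do essentially all of the work once the right low-degree polynomial is in hand, and the only mild point to state carefully is why $p$ belongs to $\mathscr{P}_1$ at all, which is immediate from its being globally positive as a real polynomial. The interpretive remark worth including is that this theorem is exactly Lemma \ref{lem:contain} for $n = 1$ sharpened to proper containment, and that the separating polynomial is, in fact, of minimal degree among such examples since every $p \in \mathscr{P}_1$ with $\deg p \le 1$ has nonnegative coefficients and hence lies in $\mathscr{P}_2$.
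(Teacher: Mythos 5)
Your proposal is correct and follows essentially the same route as the paper: the containment comes from Lemma \ref{lem:contain}, and properness is witnessed by a degree-two polynomial that is globally nonnegative on $\mathbb{R}$ (hence in $\mathscr{P}_1$) yet has a negative coefficient, which Theorem \ref{thm:degcond} rules out of $\mathscr{P}_2$. The paper uses $(x-2)^2 = x^2 - 4x + 4$ instead of $x^2 - x + 1$, but this is an immaterial difference.
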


\begin{proof}
If $p(x) = x^2 - 4x + 4 = (x-2)^2$, then $p(x) \in \mathscr{P}_1$ since $p(x) \geq 0, \forall x \in \mathbb{R}$. However, $p(x) \notin \mathscr{P}_2$ in view of Theorem \ref{thm:degcond}.
\end{proof}

Clark and Paparella \cite[Theorem 5.6]{clark2021polynomials} recently showed that $\mathscr{P}_3 \subset \mathscr{P}_2$. Thus, we offer the following. 

\begin{conj}
If $n \ge 3$, then $\mathscr{P}_{n+1} \subset \mathscr{P}_{n}$.
\end{conj}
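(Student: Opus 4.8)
Toward a proof of this conjecture, observe first that the inclusion $\mathscr{P}_{n+1}\subseteq\mathscr{P}_n$ is Lemma~\ref{lem:contain}, so only strictness must be addressed. The plan is to reduce the problem to producing, for each $n\ge 3$, a \emph{single} polynomial $p_n\in\mathscr{P}_n$ whose coefficient of $x^n$ is negative. Granting such a $p_n$, one has $\deg p_n\ge n$, so that Corollary~\ref{cor:firstnterms}, applied with $n+1$ in place of $n$, forces every member of $\mathscr{P}_{n+1}$ to have nonnegative first $n+1$ coefficients and in particular a nonnegative coefficient of $x^n$; hence $p_n\notin\mathscr{P}_{n+1}$, and $\mathscr{P}_{n+1}\subsetneq\mathscr{P}_n$. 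Furthermore, such a $p_n$ could even be sought with $\deg p_n = 2n$, in which case Corollary~\ref{cor:firstnterms} and Theorem~\ref{thm:lastnterms} force $a_n$ to be its \emph{only} negative coefficient; thus the conjecture is implied by the assertion that the degree bound in Theorem~\ref{thm:degcond} is sharp for every $n$, and the base cases $n=1$ (via $(x-2)^2 = x^2-4x+4$) and $n=2$ (via the quartic of Clark and Paparella \cite{clark2021polynomials}) are instances of exactly this template.

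For the construction, write $p_n(x) = h_n(x) - c\,x^n$, where $c>0$ is to be chosen small and $h_n\in\mathbb{R}[x]$ has nonnegative coefficients with a positive coefficient of $x^n$. By definition, $p_n\in\mathscr{P}_n$ if and only if $h_n(A) - cA^n = p_n(A)\ge 0$ for every nonnegative $A\in\mathsf{M}_n(\mathbb{R})$, i.e., $h_n(A)\ge cA^n$ entrywise. Natural candidates for $h_n$ are large multiples of $\bigl(1 + x + \cdots + x^{n-1} + x^{n+1} + \cdots + x^{N}\bigr)^2$, or of $1 + x + \cdots + x^{N}$ for suitably large $N$, possibly after invoking the Cayley--Hamilton theorem to rewrite $A^n$ as a combination of $I,A,\dots,A^{n-1}$ with coefficients given by the principal minors of $A$; one would then tune the coefficients of $h_n$ and the smallness of $c$ so that the entrywise domination survives. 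It is worth recording that the most obvious guess $p_n(x) = (x^n-\alpha)^2$ fails for every $n\ge 3$: taking $A = C_{n-1}\oplus[\,0\,]\in\mathsf{M}_n(\mathbb{R})$ (with $C_{n-1}$ the $(n-1)$-by-$(n-1)$ circulant of reference vector $e_2$), one finds $(A^n)_{12}=1$ while $(A^{2n})_{12}=0$, so the $(1,2)$-entry of $(A^n-\alpha I)^2$ equals $-2\alpha<0$.

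The crux, which I expect to be the main obstacle, is verifying the domination $h_n(A)\ge cA^n$ for \emph{all} nonnegative matrices of order $n$, rather than only for the structured families---circulants (Proposition~\ref{prop:scaledcirculant}), Jordan blocks (Theorem~\ref{thm:containDerivate})---for which effective tools are available. The difficulty is that a positive off-diagonal entry $(A^n)_{ij}$ need not be reinforced at nearby degrees: if $A$ is the adjacency matrix of a directed cycle, then walks between a prescribed pair of vertices occur only at lengths lying in a single residue class, so crude term-by-term or subsum estimates---which control $c$ only up to the $x^n$-coefficient of $h_n$ and never beyond it---cannot succeed, exactly as the failure of $(x^n-\alpha)^2$ illustrates. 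A successful argument will likely require either genuinely combinatorial control of walk counts in the digraph of $A$, or a compactness/perturbation reduction to finitely many extremal nonnegative matrices, or a degree-free estimate exploiting Perron--Frobenius structure. An alternative would be an inductive scheme transporting a witness for $\mathscr{P}_n\subsetneq\mathscr{P}_{n-1}$ to one for $\mathscr{P}_{n+1}\subsetneq\mathscr{P}_n$---for instance by multiplication by a carefully chosen polynomial with nonnegative coefficients, or by composition (Observation~\ref{lem:operations})---though no such operation is presently known, and the same reinforcement issue appears to obstruct it.
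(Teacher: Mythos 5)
This statement is a \emph{conjecture} that the paper deliberately leaves open --- there is no proof in the paper to compare against --- and your proposal does not close it either, as you yourself acknowledge. What you have is a correct and genuinely useful \emph{reduction}, not a proof. The reduction is sound: by Lemma~\ref{lem:contain} only strictness is at issue, and if some $p_n \in \mathscr{P}_n$ has a negative coefficient of $x^n$, then Corollary~\ref{cor:firstnterms} (applied to $\mathscr{P}_{n+1}$, whose members of degree at least $n$ must have $a_0,\dots,a_n \ge 0$) shows $p_n \notin \mathscr{P}_{n+1}$. Your sharpening to $\deg p_n = 2n$, where Corollary~\ref{cor:firstnterms} and Theorem~\ref{thm:lastnterms} pin $a_n$ down as the only coefficient allowed to be negative, correctly identifies the conjecture as equivalent to the sharpness of the degree bound in Theorem~\ref{thm:degcond}, and it is consistent with how the known cases $n=1$ and $n=2$ are witnessed. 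Your computation disposing of the candidate $(x^n-\alpha)^2$ also checks out: with $A = C_{n-1}\oplus[\,0\,]$ one has $A^n = C_{n-1}\oplus 0$ and $A^{2n} = C_{n-1}^2\oplus 0$, so the $(1,2)$-entry of $(A^n-\alpha I)^2$ is indeed $-2\alpha$.

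The genuine gap is that no witness $p_n$ is ever produced for any $n \ge 3$. The ansatz $p_n = h_n - c\,x^n$ with $h_n$ having nonnegative coefficients merely restates the problem as the entrywise domination $h_n(A) \ge c\,A^n$ over \emph{all} nonnegative $A \in \mathsf{M}_n(\mathbb{R})$, and your own obstruction analysis explains why none of the proposed candidates for $h_n$ is known to work: for the adjacency matrix of a directed cycle the entry $(A^n)_{ij}$ can be positive while $(A^k)_{ij}=0$ for every $k$ outside a single residue class modulo the cycle length, so no finite collection of other powers can be counted on to dominate $A^n$ entry by entry without a quantitative argument that is not supplied. (Note also that this domination must be verified against matrices of unbounded norm, so ``choosing $c$ small'' cannot be justified by a naive compactness argument without first reducing to a bounded set, e.g.\ by homogeneity considerations that interact nontrivially with the inhomogeneous polynomial $h_n$.) Until such a construction or a transport mechanism from the $n-1$ case is exhibited, the statement remains exactly what the paper says it is: a conjecture.
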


If $V$ is a vector space over $\mathbb{R}$ and $U$ is a nonempty subset of $V$, then $U$ is called a \emph{convex cone} if $\alpha u + \beta v \in U$ for every $u$, $v \in U$ and $\alpha$, $\beta \geq 0$. A point $x$ of a convex cone $U$ is called an \emph{extreme direction (or ray)} of $U$ if, whenever $x = \alpha u + \beta v$, with $\alpha,\beta > 0$ and $u, v \in U$, then $x = u = v$. By Observation \ref{lem:operations}, $\mathscr{P}_n$ is a convex cone of $\mathbb{C}[x]$ and $\{ p_k(x) = x^k \in \mathbb{C}[x] \mid k \ge 0 \}$ consists of extreme directions. As such, we pose the following line of inquiry. 

\begin{prob}
Identify the extreme directions of $\mathscr{P}_n$.
\end{prob}

\section*{Acknowledgements}

We thank the anonymous referee for their careful review and suggestions that improved this work and the the University of Washington Bothell for generous funding that supported this work. 

\bibliographystyle{abbrv}
\bibliography{nonpoly}

\end{document}